\newcommand{\XREF}{\label}
\newcommand{\R}{{\mathbb R}}
\newcommand{\C}{{\mathbb C}}
\newcommand{\Z}{{\mathbb Z}}
\newcommand{\CC}{{\widehat \C}}
\newtheorem{theorem}{Theorem}
\newtheorem{proposition}{Proposition}
\theoremstyle{definition}
\newtheorem{definition}{Definition}
\theoremstyle{remark}
\newtheorem{remark}{Remark}
\newtheorem{example}{Example}
\newtheorem{question}{Open Question}
\numberwithin{equation}{section} \numberwithin{theorem}{section}
\numberwithin{definition}{section} \numberwithin{remark}{section}
\numberwithin{example}{section} \numberwithin{lemma}{section}
\numberwithin{property}{section}
\numberwithin{proposition}{section} \numberwithin{claim}{section}
\numberwithin{othertheorem}{section} \numberwithin{conj}{section}
\numberwithin{corollary}{section}
\begin{document}

\title{Hereditarily non uniformly perfect sets}

\thanks{2010 Mathematics Subject Classification: Primary 	31A15, 30C85, 37F35. Key words and phrases. Hausdorff dimension, uniformly perfect sets, capacity, porous sets.}
\author{Rich Stankewitz}
\address[Rich Stankewitz]{Department of Mathematical Sciences\\
    Ball State University\\
    Muncie, IN 47306}
\email{rstankewitz@bsu.edu}

\author{Toshiyuki Sugawa}
\address[Toshiyuki Sugawa]{
Graduate School of Information Sciences\\
Tohoku University\\
Sendai 980-8578, Japan}
\email{sugawa@math.is.tohoku.ac.jp}

\author{Hiroki Sumi}
\address[Hiroki Sumi]
    {Department of Mathematics\\
    Graduate School of Science\\
    Osaka University\\
    1-1, Machikaneyama, Toyonaka\\
    Osaka, 560-0043, Japan\\
    http://www.math.sci.osaka-u.ac.jp/$\sim $sumi/welcomeou-e.html}
\email{sumi@math.sci.osaka-u.ac.jp}

\begin{abstract}
We introduce the concept of hereditarily non uniformly perfect sets, compact sets for which no compact subset is uniformly perfect, and compare them with the following: Hausdorff dimension zero sets, logarithmic capacity zero sets, Lebesgue 2-dimensional measure zero sets, and porous sets.  In particular, we give a detailed construction of a compact set in the plane of Hausdorff dimension 2 (and positive logarithmic capacity) which is hereditarily non uniformly perfect.
\end{abstract}

\maketitle

\section{Introduction and results}

Various types of non-smooth sets arise naturally in many mathematical settings.  Julia sets, attractor sets generated from iterated function systems, bifurcation sets in parameter spaces such as the boundary of the Mandelbrot set, and Kakeya sets are all examples of non-smooth sets which are studied intensely.  Classical analysis has given way to fractal geometry for the purposes of studying such ``pathological" sets.  Many tools such as Hausdorff measure and Hausdorff dimension, logarithmic capacity, porosity, Lebesgue measure, and uniform perfectness have been utilized to discern certain fundamental ``thickness/thinness" properties of such pathological sets.  In this paper we compare how the above tools and properties relate to each other with regard to compact sets in the complex plane $\C$.  In particular, we are interested in how the thinness properties with respect to each of these notions relate.  We will consider compact sets $E \subset \C$ and study the following conditions: $\dim_H E=0$,  Cap $E = 0$, $m_2(E)=0$, $E$ is porous.  However, these four properties are hereditary properties of thinness in the sense that if $E$ satisfies one of these properties, then all subsets of $E$ also satisfy the same property.  Uniform perfectness of a (compact) set $E$ (see definition below) is a property quantifying a \textit{uniform} thickness near each point of $E$.  To get at a compatible notion of thinness we need to require more than just that the set fails to be uniformly perfect.  For example, a set $E=F \cup \{z_0\}$, where $F$ is uniformly perfect and $z_0 \notin F$, fails to be uniformly perfect (since $z_0$ is isolated), yet $E$ is ``thick" near all of the points of $F$.  Thus to capture the correct idea of being ``thin", as a counterpart to the uniformly perfect notion of ``thick", we offer the following.

\begin{definition}
A compact set $E$ is called \textit{hereditarily non uniformly perfect} (HNUP) if no subset of $E$ is uniformly perfect.
\end{definition}

The main results of this paper are stated below as Theorems~\ref{TableThm},~\ref{dim=1},  and~\ref{dim=2}.

For Theorem~\ref{TableThm}, the relevant definitions are found in Section~\ref{SecDefs} and the superscripts correspond to the proofs given in Section~\ref{proofs}.

\begin{theorem}\label{TableThm}
The following implications indicated in Table~\ref{Table}, showing when $X$ implies $Y$ in the case that $E \subset \C$ is a compact set, all hold.
\begin{table}[here]
\begin{tabular}{|c|c|c|c|c|c|c|c|}
  \hline
  % after \\: \hline or \cline{col1-col2} \cline{col3-col4} ...
   \backslashbox[0mm]{$Y$}{$X$} &  $\dim_H E=0$ &  Cap $E = 0$  & $E$ is HNUP &  $m_2(E)=0$ & $E$ is porous \\
  \hline

   $\dim_H E=0$ & $\ast$ &  $yes^1$ &  $no^2$ &  $no^3$ &  $no^4$   \\
  Cap $E = 0$& $no^5$ & $\ast$ & $no^6$ & $no^7$ & $no^8$ \\
  $E$ is HNUP & $yes^9$ & $yes^{10}$ & $\ast$ & $no^{11}$ & $no^{12}$  \\
  $m_2(E)=0$ & $yes^{13}$ & $yes^{14}$ & $no^{15}$ & $\ast$ & $yes^{16}$  \\
  $E$ is porous  & $no^{17}$ & $no^{18}$ & $no^{19}$ & $no^{20}$ & $\ast$  \\

  \hline
\end{tabular}
  \caption{Does $X$ imply $Y$ when $E \subset \C$ is a compact set?
}\label{Table}
\end{table}
\end{theorem}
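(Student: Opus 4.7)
The plan is to verify each of the twenty off-diagonal entries of Table~\ref{Table} individually. They divide naturally into the five ``yes'' entries, which follow from known relationships between the thinness notions, and the fifteen ``no'' entries, which demand explicit counterexamples. Most of the bookkeeping is routine; the real work is concealed in entries 2, 6, and 15, which rely on Theorem~\ref{dim=2}.

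For the yes entries I would argue as follows. Entry 1 (Cap $E = 0 \Rightarrow \dim_H E = 0$) follows because vanishing logarithmic capacity forces the $s$-Riesz capacity to vanish for every $s > 0$, together with Frostman's identity $\dim_H E = \sup\{s > 0 : C_s(E) > 0\}$. Entries 9 and 10 ($\dim_H E = 0$ and Cap $E = 0$ each imply HNUP) follow by contraposition: if $E$ had a uniformly perfect subset $F$, then the classical fact that uniformly perfect compact sets carry strictly positive Hausdorff dimension and strictly positive logarithmic capacity, combined with monotonicity of $\dim_H$ and Cap under subsets, would force $\dim_H E > 0$ or Cap $E > 0$. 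Entries 13 and 14 reduce to $\dim_H E < 2 \Rightarrow m_2(E) = 0$ (and entry 1 for the latter), while entry 16 is the classical fact that porosity implies $\dim_H E < 2$.

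For the no entries I would assemble a small family of test sets. The middle-thirds Cantor set is uniformly perfect and porous with $m_2 = 0$, Cap $> 0$, and $0 < \dim_H < 2$, handling entries 3, 4, 7, 8, 11, and 12. The convergent sequence $E = \{0\} \cup \{1/n : n \in \N\}$ is countable (so $\dim_H E = 0$, Cap $E = 0$, and $m_2(E) = 0$) and trivially HNUP, yet it fails to be porous at $0$ because consecutive points $1/n, 1/(n+1)$ lie within distance $\sim 1/n^2 = o(1/n)$; this dispatches entries 17, 18, 19, and 20. Entry 5 requires a compact set of Hausdorff dimension zero but positive logarithmic capacity, a classical object obtainable as a generalized Cantor set whose gap sequence is tuned so that a natural measure has finite logarithmic energy while every positive $s$-energy is infinite. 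Entries 2, 6, and 15 all follow from the HNUP set of Hausdorff dimension $2$ and positive logarithmic capacity constructed in Theorem~\ref{dim=2}, whose construction can be arranged to have positive planar Lebesgue measure as needed for entry 15.

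The principal obstacle is therefore not the table itself but Theorem~\ref{dim=2}: one must simultaneously enforce HNUP, an antisymmetric thinness condition ruling out every uniformly perfect subset, against maximal Hausdorff dimension, positive capacity, and positive area. All other entries of Table~\ref{Table} reduce to standard potential-theoretic and geometric facts once the correct test sets are in hand.
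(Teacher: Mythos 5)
Two of your counterexamples do not work, and they happen to cover the only genuinely hard entries of the table. First, entries 17--20: the set $E=\{0\}\cup\{1/n:n\in\N\}$ \emph{is} porous as a subset of $\C$ under the definition in use here. Since $E\subset\R$, for any $a\in E$ and $r>0$ the ball $B(a+ir/2,\,r/2)$ is contained in $B(a,r)$ and misses $\R$ entirely, so $E$ is porous with constant $c=1/2$; your ``consecutive points $1/n,1/(n+1)$ are too close'' argument is a one-dimensional porosity argument and is irrelevant in the plane. Indeed this is exactly why the paper's entries (4), (8), (12) use arbitrary subsets of $\R$ as porous examples. A correct counterexample must be genuinely two-dimensional near some point: the paper takes $E=\overline{\cup_n C(0,1/n)}$, the closure of concentric circles accumulating at $0$, where the largest disk in $B(0,1/n)\setminus E$ has radius $\sim 1/(2n(n+1))$, so porosity fails at the origin; for entries 17--19 each circle is replaced by a large finite set of equally spaced points to make $E$ countable (hence of dimension zero, capacity zero, and HNUP) while still not porous.

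Second, entry 15 (HNUP does not imply $m_2(E)=0$) cannot be obtained from Theorem~\ref{dim=2}. That theorem's set satisfies $m_2(E)=0$ by construction --- it is a countable union of products $W_m\times W_m$ each of Hausdorff dimension strictly less than $2$ --- and the Cantor-type mechanism that forces HNUP ($\liminf a_k=0$, giving separating annuli of unbounded modulus at every point) also forces the total length of the approximating intervals to tend to $0$, so no rearrangement of that construction yields positive area. Producing a compact HNUP set of positive planar Lebesgue measure is the one entry in the table that requires heavy machinery: the paper uses McMullen's theorem that the Diophantine set $D(\Gamma)$ of a nonuniform Kleinian lattice is absolutely winning (hence meets every uniformly perfect set, by the diffuseness results of Broderick--Fishman--Kleinbock--Reich--Weiss) and is $\sigma$-porous (hence null), so any compact subset of positive measure of the complement $W(\Gamma)$ is HNUP. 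Your remaining entries are essentially correct and follow the paper's route (entry 5 is stated only in outline, but the tuned Cantor set with $\dim_H=0$ and positive capacity you describe is exactly the paper's Example~\ref{hdim=0}; entries 2 and 6 can indeed be read off from Theorem~\ref{dim=2}, though the paper uses the simpler one-dimensional Example~\ref{hdim}).
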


In Section~\ref{ex} we use a straightforward Cantor-like construction to build sets with certain properties that are then assembled in such a way to justify Theorems~\ref{dim=1} and~\ref{dim=2}, showing all necessary details.  Instead of using a constant ratio of lengths of basic intervals in subsequent stages, we carefully choose sequences of these ratios that vary in a way which provide the desired properties.  Those familiar with non-autonomous IFS theory will recognize a connection as our sets are often attractor sets of such non-autonomous IFS's.

\begin{theorem}\label{dim=1}
There exists a compact HNUP set $W \subset \R$ such that $\dim_H W=1, H^1(W)=0$ and Cap $W>0$.  Hence, for any $0< s <1$ and for any $0<c<\infty$, there exists a compact HNUP subset $K$ of $W$ such that $\dim _{H}(K)=s$ and $H^{s}(K)=c$.

Moreover, we can arrange the set $W$ so that $W$ satisfies the following additional property. For each $x\in W$ and for each $M>1$, there exist two positive numbers $r, R$ with $R/r\geq M$ such that $\{ y\in \Bbb{R} \mid r<|y-x|<R\} \subset \Bbb{R}\setminus W$ and $\{ y\in \Bbb{R} \mid |y-x|\geq R\} \cap W\neq \emptyset .$
\end{theorem}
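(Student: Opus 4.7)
The plan is to construct $W$ as a nonautonomous symmetric Cantor subset of $[0,1]$. Starting from $I_{0}=[0,1]$, at each stage $n\geq 1$ I pick a ratio $a_{n}\in (0,1/2)$ and replace every stage-$(n-1)$ basic interval $J$ of length $\ell_{n-1}$ by its two extremal subintervals of length $\ell_{n}=a_{n}\ell_{n-1}$, removing the middle open interval of length $(1-2a_{n})\ell_{n-1}$. Let $W_{n}$ be the union of the $2^{n}$ stage-$n$ intervals and $W=\bigcap_{n}W_{n}$. All four required properties ($\dim_{H}W=1$, $H^{1}(W)=0$, Cap $W>0$, and the moreover annular-gap condition) will follow from one careful choice of $(a_{n})$, built around a very sparse subsequence of ``tiny'' stages.

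Explicitly, fix $n_{j}=2^{j}$ and set $a_{n_{j}}=1/n_{j}$ at sparse stages, while $a_{n}=1/2-1/n$ elsewhere. At a sparse stage $n_{j}$, each stage-$n_{j}$ interval is a tiny piece of length $\ell_{n_{j}}$ sitting at one end of a parent of length $\ell_{n_{j}-1}$, separated from its sibling by a gap of width $(1-2/n_{j})\ell_{n_{j}-1}$. For any $x\in W$ lying in such a stage-$n_{j}$ interval $J$, take $r=r_{j}:=\ell_{n_{j}}$ (so that $W\cap B(x,r)\subset J$) and $R=R_{j}:=\tfrac{1}{2}(1-2/n_{j})\ell_{n_{j}-1}$; then $\{y\in\R: r<|y-x|<R\}$ is disjoint from $W$ while the sibling interval provides points of $W$ at distance $\geq R$ from $x$, and $R_{j}/r_{j}=\Theta(n_{j})\to\infty$ delivers the moreover property. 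HNUP is then automatic: if some compact $E'\subset W$ were uniformly perfect with constant $c>1$, pick $x\in E'$ and $j$ so large that $r_{j}<\mathrm{diam}(E')/c$ and $cr_{j}<R_{j}$; uniform perfectness would require $E'\cap\{y: r_{j}<|y-x|<cr_{j}\}\neq\emptyset$, contradicting the disjointness of that annulus from $W\supset E'$.

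The metric properties follow from $-\log\ell_{n}=\sum_{k\leq n}\log(1/a_{k})=n\log 2+o(n)$ (the sparse contributions sum to $O((\log n)^{2})$ and the rest to $(n-O(\log n))\log 2+O(\log n)$). The natural cover gives $H^{1}(W)\leq\liminf_{n}2^{n}\ell_{n}=\liminf_{n}\prod_{k\leq n}(2a_{k})=0$, since already the factors $2a_{n_{j}}=2/n_{j}$ drive the product to $0$. Frostman's mass distribution principle applied to the canonical Cantor measure $\mu$ (with $\mu(J)=2^{-k}$ on each stage-$k$ interval $J$) yields $\mu(B(x,r))\leq C_{s}r^{s}$ for every $s<1$, hence $\dim_{H}W\geq s$; combined with $\dim_{H}W\leq 1$ this gives $\dim_{H}W=1$. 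For capacity I would bound the logarithmic energy of $\mu$ by partitioning pairs $(z,w)\in W\times W$ according to the deepest stage $k$ at which they lie in the same basic interval, yielding
\[
I(\mu)=\iint\log\frac{1}{|z-w|}\,d\mu(z)\,d\mu(w)\;\leq\;\sum_{k=0}^{\infty}2^{-k-1}\Bigl[\log(1/\ell_{k})+\log\bigl(1/(1-2a_{k+1})\bigr)\Bigr].
\]
Both series converge (the first telescopes to $\sum_{j}2^{-j}\log(1/a_{j})$, finite because the non-sparse terms are $O(2^{-j})$ and the sparse ones are $2^{-n_{i}}\log n_{i}$; the second because $\log(1/(1-2a_{k+1}))=O(\log k)$ in the non-sparse case and is negligible at sparse stages). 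Hence $I(\mu)<\infty$ and Cap $W>0$.

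For the second assertion, $\dim_{H}W=1$ forces $H^{s}(W)=\infty$ for every $s\in(0,1)$, so a classical theorem on Hausdorff measures (every Borel set of infinite $s$-Hausdorff measure contains, for each $c\in(0,\infty)$, a compact subset of $s$-measure exactly $c$) supplies a compact $K\subset W$ with $H^{s}(K)=c$, which forces $\dim_{H}K=s$; the HNUP property of $K$ is automatic because HNUP is preserved under passage to subsets. The main obstacle is the simultaneous balancing of all four properties by a single sequence $(a_{n})$; the capacity bound is the most delicate point since it requires quantitative control of the interaction across many nonuniform Cantor scales, handled cleanly by the energy-splitting formula above.
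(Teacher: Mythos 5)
Your metric estimates (the dimension computation via $-\log\ell_n=n\log 2+O((\log n)^2)$, the bound $H^1(W)\le\liminf 2^n\ell_n=0$, and the energy estimate for positive capacity) are fine, but the core of the theorem --- HNUP and the ``moreover'' annulus property --- rests on a claim that is false for your construction: that the annulus $\{y: r_j<|y-x|<R_j\}$ with $r_j=\ell_{n_j}$ and $R_j=\tfrac12(1-2/n_j)\ell_{n_j-1}$ is disjoint from $W$. This only accounts for the gap between $J$ and its \emph{sibling}; it ignores the points of $W$ on the \emph{outer} side of $J$'s parent. Concretely, let $J'$ be the stage-$(n_j-1)$ parent of $J$ and suppose $J'$ is a right child at the (non-sparse) stage $n_j-1$, and take $x$ to be the outer (left) endpoint of $J$, which coincides with the left endpoint of $J'$. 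The gap separating $J'$ from its sibling has width $(1-2a_{n_j-1})\ell_{n_j-2}=\tfrac{2}{n_j-1}\ell_{n_j-2}\approx\tfrac{4}{n_j}\ell_{n_j-1}=4\ell_{n_j}$, so there is a point of $W$ at distance about $4r_j$ from $x$ --- strictly between $r_j$ and $R_j\approx\tfrac{n_j}{2}r_j$. Thus your annulus meets $W$, and the contradiction in your HNUP argument never materializes. The root cause is that forcing $\dim_H W=1$ with binary branching requires $a_n\to 1/2$, which makes the non-sparse gaps $(1-2a_n)\ell_{n-1}$ collapse relative to the parent length; the gaps are then not monotone in the stage, so the large sparse-stage gap around a basic interval does not produce a separating annulus of comparable modulus around every point of that interval. (This is exactly why the paper's Theorem on the dichotomy imposes $a_k\le 1/(m+1)$: it forces decreasing gaps, at the cost of capping the dimension of a single such set at $\log m/\log(m+1)<1$.)

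This obstruction is why the paper does not use a single Cantor set at all: it takes $W=\{0\}\cup\bigcup_{m\ge 2}W_m'$, where $W_m$ is an $m$-branch Cantor set with ratios bounded by $1/(m+1)$ (so each $W_m$ is HNUP with the annulus property and has positive capacity), $\dim_H W_m=\log m/\log(m+1)\nearrow 1$, and the scaled copies $W_m'$ are placed in disjoint round annuli shrinking to $0$ so that the union remains HNUP and attains dimension $1$ as a supremum. If you want to rescue a single-set construction you would need either to let the branching number $m_n\to\infty$ along the stages (keeping all ratios $\le 1/(m_n+1)$ so gaps stay monotone), or to prove the ``moreover'' property by a genuinely different selection of annuli adapted to the position of $x$ relative to the small non-sparse gaps --- neither of which is addressed in your argument. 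The final paragraph (extracting $K$ with $H^s(K)=c$ from $H^s(W)=\infty$ and noting HNUP passes to subsets) is correct and matches the paper.
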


\begin{theorem}\label{dim=2}
There exists a compact HNUP set $E \subset \C$ such that $\dim_H E=2, m_2(E)=0$ and Cap $E>0$.  Hence, for any $0< s<2$ and for any $0<c<\infty$, there exists a compact HNUP subset $K$ of $E$ such that $\dim _{H}(K)=s$ and $H^{s}(K)=c.$

Moreover, we can arrange the set $E$ so that $E$ satisfies the following additional property. For each $x\in E$ and for each $M>1$, there exist two positive numbers $r, R$ with $R/r\geq M$ such that $\{ y\in \Bbb{C} \mid r<|y-x|<R\} \subset \Bbb{C}\setminus E$ and $\{ y\in \Bbb{C} \mid |y-x|\geq R\} \cap E\neq \emptyset .$
\end{theorem}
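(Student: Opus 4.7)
The plan is to realize $E$ as a non-autonomous Cantor-like subset of $[0,1]^2$. Choose a sequence of ratios $(\alpha_n)\subset(0,1/2)$, set $E_0=[0,1]^2$, and given $E_{n-1}$ construct $E_n$ by replacing each component square of side $s_{n-1}$ by sub-squares of side $\alpha_ns_{n-1}$ according to one of two schemes. At a \emph{fat} stage the parent is replaced by its four corner sub-squares with $\alpha_n$ taken just below $1/2$. At a \emph{thin} stage, indexed by a sparse subsequence $n_j\to\infty$, the parent is replaced by a single centered sub-square with $\alpha_{n_j}=1/(M_j+3)$ for some $M_j\to\infty$. Set $E=\bigcap_nE_n$, and write $s_n=\alpha_1\cdots\alpha_n$, $J(n)=\#\{j:n_j\le n\}$, and $N_n=4^{n-J(n)}$ for the number of components of $E_n$.

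By taking $(n_j)$ sparse enough and $(M_j)$ growing slowly enough, one arranges simultaneously
\[
J(n)+\sum_{n_j\le n}\log M_j=o(n),\qquad \prod_{j\ge 1}M_j=\infty,\qquad \sum_{n\ge 1}N_n^{-1}\log(1/s_n)<\infty.
\]
The first condition, combined with the standard dimension computation for non-autonomous Cantor constructions, yields
\[
\dim_H E=\lim_n\frac{\log N_n}{\log(1/s_n)}=\lim_n\frac{2(n-J(n))\log 2}{(n-J(n))\log 2+\sum_{n_j\le n}\log(M_j+3)+o(n)}=2.
\]
The second yields $m_2(E)\le m_2(E_n)=N_n s_n^2=\prod_{n_j\le n}(M_j+3)^{-2}\to 0$, hence $m_2(E)=0$. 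The third ensures the logarithmic energy of the uniform self-similar measure $\mu$ (mass $N_n^{-1}$ on each stage-$n$ component) is finite, via the estimate $I(\mu)\lesssim\sum_kN_k^{-1}\log(1/s_k)$ obtained by decomposing $\iint\log|z-w|^{-1}\,d\mu(z)\,d\mu(w)$ according to the deepest common ancestor of $z$ and $w$; this gives $\mathrm{Cap}\,E>0$ (which also follows for free from $\dim_H E>0$).

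For HNUP and the annulus clause, fix $x\in E$ and $M>1$, choose $j$ with $M_j\ge M\sqrt{2}$, and let $Q$ be the stage-$n_j$ component containing $x$. Since $n_j$ is thin, $Q$ is centered in its stage-$(n_j-1)$ parent $P$ of side $\ell=s_{n_j-1}$; hence every point of $Q$ is at distance at least $\ell(1-\alpha_{n_j})/2$ from $\partial P$. Every other stage-$n_j$ component is centered in a different parent $P'$ of the same side, so by the same buffer estimate $\mathrm{dist}(Q,Q')\ge \ell(1-\alpha_{n_j})/2+G+\ell(1-\alpha_{n_j})/2\ge \ell(1-\alpha_{n_j})$, where $G\ge 0$ is the gap between $P$ and $P'$. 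Setting $r=\alpha_{n_j}\ell\sqrt{2}$ (so $\overline{B(x,r)}\supset Q$) and $R=\ell(1-\alpha_{n_j})$, the annulus $\{r<|y-x|<R\}$ is disjoint from $E$, and
\[
\frac{R}{r}=\frac{1-\alpha_{n_j}}{\alpha_{n_j}\sqrt{2}}=\frac{M_j+2}{\sqrt{2}}\ge M.
\]
Non-emptiness of $E\cap\{|y-x|\ge R\}$ is automatic since $\mathrm{diam}(E)$ is bounded below independently of $j$ while $R\le s_{n_j-1}\to 0$.

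The principal technical obstacle is the simultaneous balancing of the three quantitative conditions in the second paragraph: the thin stages are essential for HNUP but work against both the dimension and the energy estimates, so the sparsity of $(n_j)$ and the growth of $(M_j)$ must be calibrated jointly. Once $E$ is constructed with all the stated properties, the ``for any $0<s<2$ and any $0<c<\infty$'' claim follows from two standard facts: every compact subset of an HNUP set is HNUP by definition, and since $H^s(E)=\infty$ for every $s<2$ (as $\dim_H E=2$), the classical Davies subset theorem for Hausdorff measures produces, for each such $s$ and $c$, a compact subset $K\subset E$ with $\dim_H K=s$ and $H^s(K)=c$.
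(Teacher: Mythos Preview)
Your construction is correct and yields all the claimed properties, but it follows a genuinely different route from the paper's. The paper does not build $E$ as a single Cantor-type set; instead it first constructs, for each $m\ge 2$, a one-dimensional HNUP Cantor set $W_m\subset[0,1]$ with $\dim_H W_m=\log m/\log(m+1)$ (Example~\ref{hdim}), shows $G_m=W_m\times W_m$ is HNUP (Proposition~\ref{Cross}), and then places rescaled copies $G_m'$ in pairwise separated annuli $B_{2m}$ accumulating at the origin, setting $E=\{0\}\cup\bigcup_m G_m'$. Dimension $2$ then comes for free from $\dim_H E=\sup_m\dim_H G_m'$, and $m_2(E)=0$ since each $\dim_H G_m'<2$.

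What your approach buys is a single self-similar-style set rather than a countable union; what the paper's approach buys is that it completely avoids the one delicate point in your argument, namely the lower bound $\dim_H E\ge 2$. You invoke ``the standard dimension computation for non-autonomous Cantor constructions,'' but those standard results (e.g.\ Falconer's Example~4.6, on which the paper itself relies for its one-dimensional pieces) typically require a separation condition such as decreasing gaps, which fails here because your fat-stage ratios $\alpha_n$ approach $1/2$ and the gaps $(1-2\alpha_n)s_{n-1}$ become arbitrarily small relative to $s_{n-1}$. This is fixable: your set is in fact a product $C\times C$ for a one-dimensional set $C$, so $\dim_H E\ge 2\dim_H C$, and $\dim_H C=1$ can be obtained by a direct $s$-energy estimate for the natural measure (the same ancestor decomposition you use for logarithmic capacity shows $I_s(\nu)<\infty$ for every $s<1$ under your three balancing conditions, provided you also make $1-2\alpha_n$ decay slowly, say like $1/n$). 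But this step deserves to be spelled out rather than absorbed into the word ``standard.''

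Your HNUP/annulus argument, your measure-zero argument, your capacity argument, and your appeal to the Davies subset theorem for the ``hence'' clause are all correct and match what the paper does in spirit.
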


\begin{remark}
Theorem 5.4 in~\cite{FalcGeomFractSets} says that if closed set $E \subset \R^n$ has $H^s(E) = +\infty$ for some $s>0$, then for each $c \in (0,+\infty)$ there exists a compact $K \subset E$ with $H^s(K)=c$ and thus $\dim_H K =s$.  This provides the justification for the second sentences in Theorems~\ref{dim=1} and~\ref{dim=2}.
\end{remark}

With the use of far deeper results requiring much more machinery than for our proofs of Theorems~\ref{dim=1} and~\ref{dim=2}, we note that Examples~\ref{m1>0} and~\ref{m2>0} shows that there exist HNUP sets $\widetilde{W} \subset \R$ and $\widetilde{E} \subset \R^2$ with $H^1(\widetilde{W})>0$ (and thus $\dim_H \widetilde{W}=1$ and Cap $\widetilde{W}>0$) and $m_2(\widetilde{E})>0$ (and thus $\dim_H \widetilde{E}=2$ and Cap $\widetilde{E}>0$).  Note that the set $\widetilde{W}$ in Example~\ref{m1>0} does
not satisfy the additional property in Theorem~\ref{dim=1} (since, by the Lebesgue Density Theorem, any set satisfying that additional property must have Lebesgue measure zero). Similarly, $\widetilde{E}$ does not satisfy the additional property in Theorem~\ref{dim=2}.  Lastly, we note that it remains an interesting open problem to produce more elementary constructions of such sets as in Examples~\ref{m1>0} and~\ref{m2>0}.

\section{Definitions and basic facts}\label{SecDefs}

This section contains brief introductions to the definitions and basic facts relating to Hausdorff measure and Hausdorff dimension, logarithmic capacity, porosity, and uniform perfectness.  We assume that the reader is familiar with the basic properties of $m_2$, Lebesgue 2-dimensional measure.  Our primary reference for Hausdorff measure and dimension is~\cite{Falc}, which the reader may wish to have available to see all the details (including for formulas used in the constructions in Section\ref{ex}).

\subsection{Hausdorff measure and Hausdorff dimension}
Let $h:(0,+\infty) \to (0,+\infty)$ be a dimension function, i.e., a non-decreasing continuous function such that $\lim_{t\to 0^+} h(t)=0$.  The \textit{Hausdorff h-content} $\mathcal{L}_h(E)$ of a set $E\subset \C$ is defined as the infimum of the sum $\sum_k h(d(B_k))$ taken over all countable covers of $E$ by sets $B_k$.  Here and throughout $d(A)$ denotes the diameter of the set $A$ in the Euclidean metric.  The \textit{Hausdorff h-measure} of a set $E$ is given by $H_h(E)=\lim_{\epsilon \to 0^+} \inf\{\sum_k h(d(B_k))\}$ where the infimum is taken over countable covers of $E$ by sets $B_k$ such that each $d(B_k) < \epsilon$.  For our purposes we will only use dimension functions of the form $h(t)=t^\alpha$ (with $\alpha > 0$) and we shall employ the notation $\mathcal{L}^\alpha(E)=\mathcal{L}_h(E)$ and $\mathcal{H}^\alpha(E)=\mathcal{H}_h(E)$.  (Also, $\mathcal{H}^0$ corresponds to the counting measure.)  We note that with some effort one can show that $\mathcal{L}_h(E)=0$ if and only if $\mathcal{H}_h(E)=0.$  A simple calculation (see~\cite{Falc}, Ch.~2) shows that if $\mathcal{H}^\alpha(E)<\infty$, then $\mathcal{H}^\beta(E)=0$ for all $\beta>\alpha$.  For a subset $E \subset \C$ the Hausdorff dimension of $E$ is given as $\dim_H E=\sup\{\alpha:\mathcal{H}^\alpha(E)=+\infty\}=\inf\{\alpha:\mathcal{H}^\alpha(E)=0\}$.

Also found in~\cite{Falc} are the facts that (i) $\dim_H C =0$ when $C$ is countable, (ii) $\dim_H F \leq \dim_H E$ whenever $F \subseteq E$ (monotonicity), (iii) $\dim_H E = \sup_m \dim_H E_m$ when $E = \cup_{m=1}^\infty E_m$, and (iv) $\dim_H E = dim_H E'$ when $E'$ is the image of $E$ under a similarity transformation.

\vskip.2in

\subsection{Uniform perfectness}
The notion of uniform perfectness was first introduced by Beardon and Pommerenke in~\cite{BP}.  A set is called perfect if it has no isolated points, whereas the notion of uniform perfectness is a quantified version of perfectness.  Uniformly perfect sets can be equivalently defined in many different ways and be defined for sets in $\R^n$, but we are mainly concerned with sets in the complex plane $\C$ and so we define uniformly perfect sets as follows.

\begin{definition} \XREF{sepann}
For $w \in \C$ and $r, R >0$ the true annulus $A=Ann(w;r,R)=\{z:r < |z-w| < R\}$ is said to
separate a set $F \subset \C$ if $F$ intersects both components of $\C \setminus
A$ and $F \cap A = \emptyset$.
\end{definition}

\begin{definition} \XREF{mod}
The modulus of a true annulus $A=Ann(w;r,R)$ is $\mod A=\log(R/r)$.
\end{definition}

\begin{definition} \XREF{updef}
A compact subset $F \subset \C$ with two or more points is uniformly perfect\footnote{More generally, a compact subset $F \subset \R^n$ is uniformly perfect if there exists a constant $c>0$ such that $F\cap Ann(a; cr,r)\ne \emptyset$ for any $a \in F$ and $0<r<d(F)$.  Here, of course, the Euclidean metric in $\R^n$ is used to define the annular region $Ann(a; cr,r)$.} if
there exists a uniform upper bound on the moduli of all annuli which separate $F$.
\end{definition}

%\begin{definition}
%We call a set compact set $E \subset \CC$ uniformly perfect if there exists a constant $0<c<1$ with the following property:  For any finite point $a \in E$ and for any $0<r<d(E)$, there is a point $b \in E$ with $cr \le |b-a| \le r$.  Here $d(E)$ is the Euclidean diameter of $E$ and we employ the convention $d(E)=+\infty$ when $\infty \in E$.
%\end{definition}

Thus a set is uniformly perfect if there is a uniform bound on how ``fat" (large modulus) an annulus can be and still separate the set.  Uniform perfectness, in a sense, measures how ``thick" a set is near each of its points and is related in spirit to many other notions of thickness such as Hausdorff content and dimension, logarithmic capacity and density, H\"older regularity, and positive injectivity radius for Riemann surfaces.  For an excellent survey of uniform perfectness and how it relates to these and other such notions see Pommerenke~\cite{P} and Sugawa~\cite{Sug}.  In particular, we note that uniformly perfect sets are necessarily uncountable.

%Whereas uniform perfectness is a measure of uniform thickness near each point of a set, this paper will consider an opposing property which
%
%Some sets can fail to be uniformly perfect by simply having a single isolated point.  However, if a uniformly perfect set were to have one point adjoined we would still wish to consider this set "thick" in the uniformly perfect sense even though it would fail to fit the formal definition of uniformly perfect.  It would be "thick" in some places, but uniformly thick everywhere.  A natural way to describe a set which is somewhere thick, in the sense of uniform perfectness, then is to say that it contains a uniformly perfect subset.  Thus a "thin" set would be given by the following definition.
%
%\begin{definition}
%We call a set $E$ hereditarily non uniformly perfect (HNUP) if no subset of $E$ is uniformly perfect.
%\end{definition}

\subsection{Logarithmic capacity}
The logarithmic capacity of a compact set $E \subset \C$ can also be defined in many different, yet equivalent, ways (see~\cite{Fi}, Ch.~1).  For example, one may define it in terms of the asymptotic behavior of Green's function defined on $\CC \setminus E$ with pole at $\infty$ or in terms of the infimum of an energy integral.  We shall employ the following definition given via the transfinite diameter.  Set $P_n=\max \prod_{j<k} |z_j-z_k|$ where $z_j \in E$ for $j=1, \dots, n$.  The sequence $D_n=P_n^{2/n(n-1)}$ is non-increasing (see~\cite{A}, p.~23) and the logarithmic capacity is defined by Cap $E=\lim_{n \to \infty} D_n$.

We note that capacity is monotone, i.e., Cap $ F \leq \textrm{Cap } E$ whenever $F \subseteq E$, and Cap $ C =0$ when $C$ is countable.  The countability result follow from the obvious fact that a finite set has zero capacity and that for Borel sets $B_1 \subseteq B_2 \subseteq \dots$ with $B=\cup B_n$, we have Cap $B= \lim_{n \to \infty}$ Cap $B_n$ (see Theorem 5.1.3 in~\cite{Ransford}).

\subsection{Porosity}
For $a \in \C$ and $r>0$, let $B(a,r)=\{z:|z-a|<r\}$.  A compact set $E \subset \C$ is defined to be \textit{porous} if there exists a constant $c>0$ such that for any point $a \in E$ and radius $r>0$ there exists a ball $B(b,cr) \subset B(a,r)$ such that $B(b,cr) \cap E = \emptyset$.

\section{Proofs for results in Table~\ref{Table}}\label{proofs}

Here we present the proofs for the statements made in Table~\ref{Table}.  Many of these results are known (being based on classical theorems), but since the arguments are short we include them here for completeness. We also note that most results apply to $\R^n$ as well, but since that is not the focus of this paper, we do not detail arguments for such.
\vskip.2in
(1) This follows from Frostman's Theorem (see~\cite{Ts}, p.~65) which states: If $h(t)$ is a dimension function such that $\int_0^1 h(t)/t \, dt < +\infty$ and $\mathcal{H}_h(E)>0$, then Cap $E>0$.  Note that $h(t)=t^{\alpha}$, where $\alpha >0$, satisfies the integral condition.  Thus if Cap $E=0$, then $\mathcal{H}^{\alpha}(E)=0$ for all $\alpha>0$ and so $\dim_H E=0$.

\begin{remark}
The integral condition $\int_0^1 h(t)/t \, dt < +\infty$ in the Frostman theorem is crucial.  for example, if $h(t)=-1/\log t$ and $0<\mathcal{H}_h(E)<\infty$, then Cap $E=0$ (see~\cite{Ts}, p.~66).
\end{remark}

(2) Example~\ref{hdim} below gives a compact set $I$ which is HNUP yet $\dim_H I >0$.

(3) Since $H^2(E) = (4/\pi) m_2(E)$ (see~\cite{Falc}, Ch.~3), any compact set $E \subset \C$ with $0<\dim_H(E)<2$ verifies the assertion.  The middle third Cantor set is a specific example which is known to have Hausdorff dimension $\log 2/\log 3$.

(4) Any compact set $E \subset \R$ with $\dim_H(E)>0$ verifies this claim since for $x \in \R$ each ball $B(x,r) \supset B(x+ir/2,r/2)$, yet $B(x+ir/2,r/2)\cap E = \emptyset.$

(5) Example~\ref{hdim=0} below gives a compact set $I$ such that $\dim_H I=0$ yet Cap $I >0$.

(6) Example~\ref{hdim} below gives a compact set $I$ which is HNUP yet Cap $I >0$.

(7) Any compact interval of length $L$ is known to have capacity $L/4$ (see~\cite{Ts}, p.~84), yet has zero two dimensional measure.

(8) Any compact set $E \subset \R$ with Cap $E>0$ verifies this claim since each ball $B(x,r) \supset B(x+ir/2,r/2)$, yet $B(x+ir/2,r/2)\cap E = \emptyset.$

(9) Since Hausdorff dimension is monotone, the claim follows from the known fact that uniformly perfect sets necessarily have positive Hausdorff dimension.  In fact, J\"arvi and Vuorinen (see~\cite{JV}, p.~522) have shown that a compact set $E \subset \C$ is uniformly perfect if and only if there exist constants $C>0$ and $\alpha >0$ such that $\mathcal{L}^\alpha(E \cap B(a,r)) \ge Cr^\alpha$ for all $a \in E$ and $0<r<d(E)/2$.

(10) Since uniformly perfect sets necessarily have positive capacity, the claim follows from the monotonicity of capacity.  In particular, Pommerenke~\cite{P} has shown that a compact set $E \subset \C$ is uniformly perfect if and only if there exists $c>0$ such that Cap $(B(z,r) \cap E) \ge cr$ for all $z \in E$ and $0<r<d(E)$.

(11) Any compact interval has zero two dimensional measure and is trivially uniformly perfect since it is connected (and therefore cannot be separated by any annulus).

(12) Any compact interval of the real line is trivially uniformly perfect since it is connected, yet is also porous with constant $c=1/2$ as in (4).

(13) From the definition, $\dim_H E <2$ implies $m_2(E) = (\pi/4) H^2(E)=0$.

(14) It is known that Cap $E \ge \sqrt{m_2(E)/\pi e}$ (see~\cite{Ts}, p.~58) and so the assertion follows.

%(15) This is Open Question~\ref{conj1} given later in this paper.

(16) Given a point $a$ in a porous set $E$ and radius $r>0$, we must have $m_2(B(a,r) \cap E) \le m_2(B(a,r))-m_2(B(b,cr))=\pi r^2-\pi c^2r^2=\pi r^2(1-c^2)$ where $c>0$ is the constant in the definition of porosity.  The Lebesgue Density Theorem implies that $\frac{m_2(B(a,r) \cap E)}{\pi r^2}\to 1$ as $r \to 0$ for a.a. $a \in E$.  Thus we must have $m_2(E)=0$.  In fact, a stronger result holds (see~\cite{Falc}, Proposition 3.12) which shows that a porous set $E \subset \C$ must have $\dim_H E <2$.

(20) For $a \in \C$ and $r>0$, let $C(a,r)=\{z:|z-a|=r\}$.  Consider the set $E=\overline{\cup_{n=1}^\infty C(0,1/n)}$.  The ball $B(\frac{n+\frac12}{n(n+1)},\frac{1}{2n(n+1)})$ is a ball of largest radius which is contained in both $\C \setminus E$ and $B(0,1/n)$.  Since the ratio of the radii $\frac{{1}/{2n(n+1)}}{{1}/{n}}=\frac{1}{2(n+1)} \to 0$ as $n \to \infty$, we see that $E$ fails to be porous at the origin.  Thus the set $E$ is not porous, yet clearly $m_2(E)=0$.

(17-19) Replace each circle in (20) by a ``discrete circle" of 100,000 points equally spaced on the given circle to obtain a set that is not porous (at the origin).  Since this set is countable it must have Hausdorff dimension zero, logarithmic capacity zero, and be HNUP.

(15) We now present Examples~\ref{m1>0} and~\ref{m2>0}, and thus justify item (15) in Table~\ref{Table}.

But first we must offer here our special thanks to the first referee for guiding us towards the results found in~\cite{McMullen} and~\cite{BKFRW}, indicating how they could be used to answer this question which had been left unsolved in our earlier draft.  Credit for this result is therefore due to this referee whom we thank for allowing us to include it here.  The results used in the construction are not elementary and so instead of providing all details as with most of our other results in this paper, we refer to the salient results of the aforementioned papers.
We also point the interested reader to~\cite{FSU} for more information on Diophantine approximation in the (far more general) setting of hyperbolic metric spaces; its introduction alone contains
a wealth of material to orient the reader in this area.

We use the notation and terminology from~\cite{McMullen}, where, for any lattice $\Gamma \in \textrm{Isom}(\mathbb{H}^{n+1})$ with cusp at infinity, there is a corresponding \emph{Diophantine set} $D(\Gamma) \subset \R^n$ consisting of the endpoints of lifts of all bounded
geodesic rays in $\mathbb{H}^{n+1}/\Gamma$.
In general, $D(\Gamma)$ consists of the points in $\R^n$ which are \emph{badly approximable} by the cusps of $\Gamma$.  For example, as noted in~\cite{McMullen}, $D(SL_2(\Z))=\{x \in \R:\textrm{ for some }c>0, | x-\frac{{p}}{q}|  \geq \frac{c}{q^{2}}\textrm{ for all }p/q \in \mathbb{Q} \}$ is the set of real numbers which are badly approximable by rationals.  In general then we refer to $W(\Gamma)=\R^n \setminus D(\Gamma)$ as the set of points in $\R^n$ which are \emph{well approximable} by the cusps of $\Gamma$.

McMullen proves (see Theorem 1.3 in~\cite{McMullen}) that any such $D(\Gamma) \subset \R^n$ (for any lattice $\Gamma \subset \textrm{Isom}(\mathbb{H}^{n+1})$ with cusp at infinity) is \emph{absolutely winning}, which means it is 0-\emph{dimensionally absolute winning} in the terminology of p.~323 of~\cite{BKFRW}.  Hence, any uniformly perfect subset of $\R^n$, being a 0-\emph{dimensionally diffuse set} in the terminology of Definition 4.2 of~\cite{BKFRW}, must meet $D(\Gamma)$ (see Theorem 4.6 and Proposition 4.9 of~\cite{BKFRW} where even more is shown).  Note that by Theorem 1.5 in~\cite{McMullen} for each lattice $\Gamma$ with cusp at infinity its Diophantine set $D(\Gamma) \subset \R^n$ is $\sigma$-porous and thus has $n$-dimensional Lebesgue measure zero.  By the inner regularity of the Lebesgue measure, $W(\Gamma)$ then contains a compact subset $X$ of positive measure.  Thus any such compact subset $X \subset W(\Gamma)\subset \R^n$ will provide an example of the type we seek, i.e., have positive measure and be HNUP.

\begin{example}\label{m1>0}
Any compact subset $\widetilde{W}$ of \emph{well approximable} numbers $W(SL_2(\Z))$ of positive $H^1$ measure must be HNUP as any uniformly perfect set must meet the complement of $\widetilde{W}$ in the set $D(SL_2(\Z))$.  Clearly, $\dim_H \widetilde{W}=1$, which implies Cap$ \widetilde{W} >0$ as in (1) in Table~\ref{Table}.
\end{example}

\begin{example}\label{m2>0}
When $\Gamma$ is a nonuniform Kleinian lattice of M\"obius maps, its \emph{Diophantine set} $D(\Gamma) \subset \R^2$ is absolutely winning. Since such a set $D(\Gamma)$ has zero $m_2$ measure, any positive $m_2$-measure compact subset $\widetilde{E}$ of \emph{well approximable} points $W(\Gamma) \subset \R^2$ must be HNUP.  Clearly, as above we have $\dim_H \widetilde{E}=2$ and Cap $\widetilde{E}>0$.
\end{example}

\section{Proofs of Theorems~\ref{dim=1} and~\ref{dim=2}}\label{ex}

We begin by considering the following Cantor-like construction in the real line.  Fix $m \in \{2, 3, \dots\}$ and choose $0< a \leq \frac{1}{m+1}$.  Fix a sequence $\bar{a}=(a_1, a_2, \dots)$ such that $a_k \leq a$ for $k=1, 2, \dots$.  We define sets $[0,1]=I_0 \supset I_1 \supset I_2 \supset \dots$ such that each $I_k$ is a union of $m^k$ disjoint closed intervals of the same length (called basic intervals).  Each basic interval $J$ in $I_k$ will contain exactly $m$ equally spaced basic intervals in $I_{k+1}$ such that $I_{k+1}$ contains the endpoints of $J$.  We use the sequence $a_k$ to determine the scaling factor for the lengths of the basic intervals in $I_k$ as compared to the lengths of the basic intervals in $I_{k+1}$.  More precisely, setting $A_0=1$ each of the $m^k$ basic intervals in $I_k$ has length $A_k=a_1 \dots a_k$.  Call $B= 1-ma$, noting $a \leq B<1$ and that since each $a_k \leq a$ we have $1-m a_k \geq B$ for all $k$.   Thus the $m-1$ ``gaps'' between basic intervals in $I_k$ which are contained in the same basic interval from $I_{k-1}$ each have length $e_k$ where
\begin{equation}\label{gap}
(m-1)e_k=A_{k-1}-mA_k =A_{k-1}-ma_k A_{k-1} = A_{k-1}(1-ma_k)\ge  B A_{k-1}.
\end{equation}
Note also that $e_{k+1}=\frac{1}{m-1} A_k(1-ma_{k+1})=\frac{1}{m-1} a_k A_{k-1}(1-ma_{k+1}) < \frac{a}{m-1}  A_{k-1}\leq \frac{B}{m-1}  A_{k-1} \le e_k$ and thus $e_k$ strictly decreases to $0$.  Note that, because the gaps $e_k$ are decreasing, the distance between any basic subintervals of $I_k$ must be separated by a distance at least $e_k$, whether or not these basic subintervals come from the same (``parent") basic interval from $I_{k-1}$.

Define the set
\begin{equation}\label{Idef}
I=I_{\bar{a}}=\bigcap_{k=1}^\infty I_k.
\end{equation}

Since by \eqref{gap} the basic intervals in $I_k$ are ``equally spaced",
 Example~4.6 (and the discussion on p.~59) of \cite{Falc} yields that
\begin{equation}\label{dimformula}
\dim_H I= \liminf_{k \to \infty}
\frac{\log m^{k}}{-\log A_k}.
\end{equation}

\begin{remark}
Formula~\eqref{dimformula} is based on the existence of a mass distribution (measure) as constructed in the discussion prior to Proposition 1.7 of ~\cite{Falc}.  The construction, however, is known not to work in the full generality as stated in~\cite{Falc}, but it does work in the situation we present here since we are in the nice situation that each basic interval is compact.  See~\cite{Falkner}
by N.~Falkner, a Mathematical Review that describes how the general construction of the measure can fail and also states some sufficient conditions to ensure success.
\end{remark}

Key to our results is part (1) of the following dichotomy.
\begin{theorem}\label{hnup}
  For $m \in \{2, 3, \dots\}$ and $\bar{a}=(a_1, a_2, \dots) \in (0, \frac{1}{m+1}]^\mathbb{N}$, let $I_{\bar{a}}$ be given as in~\eqref{Idef}.  Then
  \begin{enumerate}
  \item if $\liminf a_k = 0$, then $I_{\bar{a}}$ is HNUP, and
  \item if $\liminf a_k > 0$, then $I_{\bar{a}}$ is uniformly perfect.
  \end{enumerate}
\end{theorem}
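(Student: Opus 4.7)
The plan splits with the dichotomy. Both parts rest on the quantitative relations
\[
A_k=a_kA_{k-1},\qquad (m-1)e_k=A_{k-1}(1-ma_k),
\]
together with the observation (already noted in the construction) that any two distinct basic intervals of $I_k$ are separated by at least $e_k$.

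\textbf{Part (1), $\liminf a_k=0 \Rightarrow I_{\bar a}$ is HNUP.} Let $F\subseteq I_{\bar a}$ be compact with at least two points; fix $x,y\in F$ with $x\neq y$ and put $\delta=|x-y|$. I will show that, given any $M>1$, there is a true annulus with modulus $>M$ that separates $F$, which forces $F$ not to be uniformly perfect. The candidate is $\mathrm{Ann}(x;A_k,e_k)$ for a large $k$ with $a_k$ small. This annulus meets no point of $I_{\bar a}$: if $J_k$ is the basic interval of $I_k$ containing $x$, then $J_k\cap I_{\bar a}\subseteq\overline{B}(x,A_k)$, and every point of $I_{\bar a}\setminus J_k$ lies at distance at least $e_k$ from $x$. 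Its modulus equals
\[
\log(e_k/A_k)=\log\frac{1-ma_k}{(m-1)a_k},
\]
which tends to $+\infty$ along any subsequence on which $a_k\to 0$; such a subsequence exists by hypothesis. The separation property follows because $x$ lies in the inner disk and, since $e_k\leq A_{k-1}/(m-1)\to 0$, for all sufficiently large $k$ in the chosen subsequence $e_k<\delta$, placing $y$ in the outer component.

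\textbf{Part (2), $\liminf a_k>0 \Rightarrow I_{\bar a}$ is uniformly perfect.} After discarding an initial segment I may assume $a_k\geq a^{*}$ for all $k$, for some fixed $a^{*}>0$. I will verify the J\"arvi--Vuorinen/Pommerenke criterion from the footnote to Definition~\ref{updef}: find $c>0$ such that $I_{\bar a}\cap\mathrm{Ann}(x;cr,r)\neq\emptyset$ for every $x\in I_{\bar a}$ and every $0<r<d(I_{\bar a})=1$. Since $m\geq 2$, the basic interval $J_k$ of $I_k$ containing $x$ has at least one sibling in its parent $J_{k-1}$, and the endpoint of that sibling nearest to $x$ is a point of $I_{\bar a}$ at distance $d_k\in[e_k,A_k+e_k]$. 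A brief computation gives
\[
A_k+e_k=\frac{A_{k-1}(1-a_k)}{m-1},\qquad e_k\geq\frac{A_{k-1}}{m^2-1}.
\]
For $r\in[A_k+e_k,\,A_{k-1}+e_{k-1})$ with $k\geq 2$, combining these with $A_{k-1}=a_{k-1}A_{k-2}\geq a^{*}A_{k-2}$ and $A_{k-1}+e_{k-1}\leq A_{k-2}/(m-1)$ yields
\[
d_k\geq e_k\geq\frac{a^{*}}{m+1}\,r\qquad\text{and}\qquad d_k\leq A_k+e_k\leq r.
\]
The remaining range $r\in[A_1+e_1,\,1)$ is handled similarly using the level-$1$ sibling endpoint, whose distance from $x$ lies in $[e_1,A_1+e_1]$ and satisfies $e_1\geq 1/(m^2-1)\geq r/(m^2-1)$.

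\textbf{Main obstacle.} Conceptually, the argument amounts to reading off, at each scale, the competition between the basic-interval length $A_k$ and the gap size $e_k$. The cleaner half is Part (1), where $a_k\to 0$ on a subsequence immediately produces the desired empty annulus around \emph{every} point of $I_{\bar a}$, with no clever choice of center required. The technical crux is Part (2): given $r$ one must select the correct level $k$ and verify that the candidate point is simultaneously above the threshold $cr$ and at most $r$. The lower bound $a_k\geq a^{*}$ enters precisely to control $A_{k-1}/A_{k-2}$ in the comparison between $e_k$ and $r$.
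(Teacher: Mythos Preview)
Your argument is correct. Part~(1) is essentially the paper's proof: both build, around each point of $I_{\bar a}$, separating annuli of modulus $\to\infty$ from the level-$k$ basic interval and the adjacent gap $e_k$; the only cosmetic difference is that the paper centers at the midpoint of $J_k$ with radii $A_k/2$ and $A_k/2+e_k$, while you center at $x$ with radii $A_k$ and $e_k$.

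For Part~(2) the routes genuinely differ. The paper works directly with Definition~\ref{updef}: given any annulus $\mathrm{Ann}(z;r,R)$ separating $I_{\bar a}$, it takes the least $n$ for which the annulus separates $I_n$, observes $R-r\le e_n$ and $2r\ge A_n$, and reads off $R/r\le 1+\tfrac{2}{m-1}(\tfrac1\delta-m)$ with $\delta=\inf_k a_k$. You instead verify the equivalent annulus-hitting criterion from the footnote to Definition~\ref{updef}, producing at each scale a sibling endpoint of $J_k$ at distance comparable to $r$. Both are standard and of similar length; the paper's route delivers the modulus bound in one stroke, whereas yours makes the companion point in $\mathrm{Ann}(x;cr,r)$ explicit. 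One small remark: the ``discarding an initial segment'' step is unnecessary, since every $a_k>0$ together with $\liminf a_k>0$ already gives $\inf_k a_k>0$ (this is precisely the $\delta$ the paper uses); if you truly discarded, you would owe a line explaining why uniform perfectness of the tail set passes to $I_{\bar a}$.
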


\begin{proof}
Call $I=I_{\bar{a}}$.

To prove (1) suppose $a_{k_n} \to 0$.  Note that each basic interval $J$ of $I_k$ is contained in the bounded component of
the complement of the annulus $C=Ann(x_J;r_k,R_k)$ where $r_k=A_k/2$,
$R_k=r_k +e_k$ and $x_J$ denotes the midpoint of $J$.  Since the gaps $e_k$ are decreasing,
$C$ separates $I$.  Along the sequence $k_n$ we use~\eqref{gap} to see
$\frac{R_{k_n}}{r_{k_n}} =
1+ \frac{e_{k_n}}{r_{k_n}} =
1+ 2\frac{e_{k_n}}{A_{k_n}} =
1+ 2\frac{A_{{k_n}-1}-m A_{k_n}}{(m-1)A_{k_n}}=
1+ \frac{2}{(m-1)}(\frac{1}{a_{k_n}}-m)
\to +\infty$ as $n \to \infty$.
Thus for each point $x \in I$ there exists an annulus $C$ of arbitrarily large modulus with arbitrarily small outer radius $R_k$ which separates $I$ and has $x$ in the bounded component of the complement of $C$. Hence, $x$ can never be a point in a uniformly perfect subset of $I$ and thus no subset of $I$ can be uniformly perfect.

To prove (2) suppose $\liminf a_k > 0$.  Thus $\delta = \inf a_k>0$ and we set $M=1+ \frac{2}{m-1}\left(\frac1{\delta} - m\right)$.  Suppose $C=Ann(z;r,R)$ is an annulus which separates $I$.  We show that $R/r \leq M$.  Since $C$ separates $I$, we must have that $C$ separates $I_k$ for large $k$ (in particular, whenever $A_k < R-r$ any basic interval of $I_k$ meeting $C$ would have one of its endpoints in $C$ thus showing that $C$ does not separate $I$ since such an endpoint is also in $I$).  Let $n$ be the smallest positive integer such that $C$ separates $I_n$.  Considering $C \cap \R$ and the fact that $C$ separates $I_n$ but does not separate $I_{n-1}$, we see that $R-r \leq e_n$.  Since $B(z,r)$ contains some basic interval of $I_n$ (of length $A_n$), we have $2r \geq A_n$.  Hence using~\eqref{gap} we see $\frac{R}{r} \leq \frac{r+e_n}{r} \leq 1 +\frac{e_n}{A_n/2}=1+ 2\frac{A_{n-1}-m A_n}{(m-1)A_n}=
1+ \frac{2}{(m-1)}(\frac{1}{a_n}-m)
\leq 1+ \frac{2}{m-1}\left(\frac1{\delta} - m\right)=M.$
\end{proof}

\begin{remark}
The artificial-looking condition $0<a \le 1/(m+1)$ in Theorem~\ref{hnup} was chosen to simplify the related proofs (by, in particular, to forcing the gap sizes to be decreasing), but it is worth noting that it can be relaxed to $0<a<1/m$ though we shall omit its more involved proof.
\end{remark}

\begin{example}\label{hdim} Fix $m \in \{2, 3, 4, \dots\}$ and choose $0< a \leq \frac{1}{m+1}$. Define $a_k=a^{n}$ if $k=2^n$ for $n=1, 2, \dots$ and $a_k=a$ otherwise.  Then the set $I$ given in~\eqref{Idef} satisfies the following:

a) $\dim_H I=\frac{\log m}{-\log a}$,

b) Cap $I>0$,

c) $I$ is HNUP.
\end{example}

\begin{remark}
We note that, if we select instead $a_k=a$ for \textit{all} $k$, then by~\eqref{dimformula} $\dim_H I$ would still equal $\frac{\log m}{-\log a}$ and by Theorem~\ref{hnup}(2) $I$ would be uniformly perfect.
%(****see later Theorem**** which can be shown directly from the definition or from noting that $I$ would be the attractor set of the conformal iterated function system generated by the maps $g_1(z)= az$ and $g_2(z)=az +1-a$ which by~\cite{RS4}, for example, is uniformly perfect).
Thus the effect of reducing the total length of the basic intervals in step $2^n$
by a very small factor of $a^n$ times those in the previous step $2^n-1$, but only
doing this sparingly at these times $2^n$, is to ensure that these sets are thin in
the sense of being HNUP, but not thin in the sense of Hausdorff dimension
as $\dim_H I> 0.$
\end{remark}

\begin{proof}[Proof of (a) in Example~\ref{hdim}]
%Set $A_0=1$ and note that each of the $2^k$ basic intervals in $I_k$ has length $A_k=a_1 \dots a_k$.  Given two basic intervals in $I_k$ which are contained in the same basic interval from $I_{k-1}$, the ``gap'' $e_k$ between them is
%\begin{equation}\label{gap}
%e_k=A_{k-1}-2A_k =A_{k-1}-2a_k A_{k-1} = A_{k-1}(1-2a_k)\ge  (1/3)A_{k-1}
%\end{equation}
%since each $a_k<1/3$.  Note also that $e_{k+1}=A_k(1-2a_{k+1})=a_k A_{k-1}(1-2a_{k+1})<(1/3)A_{k-1} \le e_k$ and thus $e_k \downto \,\, 0$.  Further, the basic intervals in $I_k$ are ``equally spaced" in the sense required in
% Example~4.6 of \cite{Falc} (and the discussion on p.~59) and so by that result
%$$\dim_H I= \liminf_{k \to \infty}
%\frac{\log2^{k}}{-\log A_k}.$$

%We show that the Hausdorff dimension $dim_H(I) = d_0= \frac{\log
%2}{-\log a}$.  For a constant $c<2-4a$ it is easily proven that $2 e_k
%\geq c A_{k-1}$ and so we may estimate the Hausdorff dimension from
%below by the formula on the bottom of p.~59 of Falconer's ``Fractal
%Geometry''

Let $k$ be a positive integer and let $n_0$ be the integer such that
$2^{n_0}\leq k < 2^{n_0+1}$.  Thus we have $ \frac{\log k}{\log 2}-1 < n_0 \leq \frac{\log k}{\log 2}$.
Since
$ \log A_k
=\log(a_1\dots a_k)=\sum_{j=1}^k \log a_j
=\sum_{j=1,a_j=a}^k\log a + \sum_{j=1,a_j\neq a}^k\log a_j
=(k-n_0) \log a + \sum_{p=1}^{n_0} \log(a^{p})
=(k-n_0) \log a + \sum_{p=1}^{n_0} p \log a
=(k-n_0) \log a + \frac{n_0(n_0+1)}{2}\log a$,
we have
\begin{equation}\label{logA1}
\left(k-\frac{\log k}{\log 2}+1\right) \log a +
\frac{1}{2}\left(\frac{\log k}{\log 2}\right) \left(\frac{\log k}{\log 2}+1\right) \log a < \log A_k
\end{equation}
and
\begin{equation}\label{logA2}
 \log A_k < \left(k-\frac{\log k}{\log 2}\right) \log a
+\frac{1}{2}\left(\frac{\log k}{\log2}-1\right)\left(\frac{\log k}{\log2}\right)\log a.
\end{equation}
Thus by~\eqref{dimformula} $\dim_H I= \liminf_{k \to \infty}
\frac{k\log m}{-\log A_k} = \frac{\log m}{-\log a}$.
\end{proof}

\begin{proof}[Proof of (b) in Example~\ref{hdim}]
This follows from (1) in Table~\ref{Table}.
%The proof of Frostman's Theorem due to Tsuji (see~\cite{Ts}, p.~65) yields  the estimate Cap~$E \ge \exp(-C_1 \frac{\int_0^1 h(t)/t \, dt}{\mathcal{L}_h(E)})$ for a compact set $E$ of diameter no greater than 1, where $C_1 >0$ is an absolute constant.  Using $h(t)=t^\alpha$ for $\alpha < \dim_H I$ thus shows Cap~$I>0$.
\end{proof}

\begin{proof}[Proof of (c) in Example~\ref{hdim}]
This follows from Theorem~\ref{hnup}(1) since $a_{2^n}=a^n \to 0$.
\end{proof}

The following result shows that we have lots and lots of examples where $I_{\bar{a}}$ is HNUP and has positive Hausdorff dimension.

\begin{theorem}
  Let $\lambda_m$ be normalized Lebesgue measure on $(0,\frac{1}{m+1}]$ so that it is a probability measure and call $\widetilde{\lambda_m}:= \bigotimes_{n=1}^\infty \lambda_m$ be the product measure on $X:= \prod_{n=1}^\infty (0, \frac{1}{m+1}]$.  Then for $\widetilde{\lambda_m}$-a.a. $\bar{a}=(a_1, a_2, \dots) \in X$, $I_{\bar{a}}$ is HNUP and $\dim_H I_{\bar{a}} = \frac{\log m}{\log(m+1)+1}>0$.
\end{theorem}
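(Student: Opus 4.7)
The plan is to split the assertion into two independent almost-sure statements---the HNUP property and the exact Hausdorff dimension---and establish each on a full-measure set, then intersect.

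For the HNUP claim I would invoke Theorem~\ref{hnup}(1), which reduces matters to showing $\liminf_{k\to\infty} a_k = 0$ almost surely. Since the $a_k$ are independent under $\widetilde{\lambda_m}$ and each event $\{a_k < \epsilon\}$ has common positive probability $(m+1)\epsilon$ for every $\epsilon \in (0, 1/(m+1))$, the second Borel--Cantelli lemma yields $a_k < \epsilon$ for infinitely many $k$ almost surely. Intersecting over a countable sequence $\epsilon_n \downarrow 0$ gives $\liminf a_k = 0$ almost surely, whence $I_{\bar{a}}$ is HNUP on that event.

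For the dimension I would apply formula~\eqref{dimformula}, which reads $\dim_H I_{\bar{a}} = \liminf_{k\to\infty} \frac{k \log m}{-\sum_{j=1}^{k}\log a_j}$. The random variables $\log a_k$ are i.i.d.\ and integrable (the only possible issue, the singularity of $\log a$ at $0$, is harmless since $\int_0^{1/(m+1)} |\log a|\,da < \infty$). A routine closed-form evaluation using $\int \log a\,da = a\log a - a$ gives
\[
\mathbb{E}[\log a_1] \;=\; (m+1)\int_0^{1/(m+1)} \log a\, da \;=\; -\log(m+1) - 1.
\]
Kolmogorov's strong law of large numbers then yields $-\frac{1}{k}\sum_{j=1}^k \log a_j \to \log(m+1) + 1$ almost surely, so the ratio in~\eqref{dimformula} converges almost surely (not merely in liminf) to $\frac{\log m}{\log(m+1)+1}$. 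Since the full limit exists, the liminf coincides with this value, and strict positivity is immediate.

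Taking the intersection of the two full-measure events completes the argument. I do not anticipate a substantive obstacle: the theorem is essentially a repackaging of the second Borel--Cantelli lemma and the SLLN applied to the i.i.d.\ sequence $(\log a_k)$. The only precondition worth checking is that every $\bar{a} \in X$ satisfies the standing hypothesis $a_k \leq 1/(m+1)$ required by Theorem~\ref{hnup} and by formula~\eqref{dimformula}, which holds pointwise by the definition of $X$.
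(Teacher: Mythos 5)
Your proof is correct and follows essentially the same route as the paper: the strong law of large numbers applied to the i.i.d.\ variables $\log a_k$ together with formula~\eqref{dimformula} for the dimension, and Theorem~\ref{hnup}(1) for the HNUP property. The only cosmetic difference is that you establish $\liminf a_k=0$ almost surely via the second Borel--Cantelli lemma, whereas the paper writes $\{\liminf a_k>0\}=\{\inf a_k>0\}$ as a countable union of sets of product measure zero; both are routine and equivalent in substance.
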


\begin{proof}
  Considering the i.i.d. random variables $a_n$ we note that the strong law of large numbers (or the ergodicity of the shift map on $X$ with respect to
$\widetilde{\lambda_{m}}$) applied to $\log a_n$ shows that for $\widetilde{\lambda_m}$-a.a. $\bar{a}=(a_1, a_2, \dots) \in X$ we have $\frac1k \log(a_1 \dots a_k) = \frac1k \sum_{j=1}^k \log a_j \to \mathbb{E}(\log a_1) = (m+1) \int_0^{\frac{1}{m+1} } \log x \, dx = -(\log (m+1) +1)$.  Hence by~\eqref{dimformula}, $\dim_h I_{\bar{a}}= \liminf_{k \to \infty}
\frac{\log m^{k}}{-\log A_k} = \liminf \frac{k \log m}{-\log(a_1 \dots a_k)} =\frac{\log m}{\log(m+1)+1}$.

Also, we see that by Theorem~\ref{hnup}(1), $I_{\bar{a}}$ is HNUP whenever $\liminf a_k =0$, which as we now demonstrate occurs for $\widetilde{\lambda_m}$-a.a. $\bar{a}=(a_1, a_2, \dots) \in X$.  For each positive integer $n$, let
$D_{n}$ be the set of $\bar{a}$ such that
$a_{j}> \frac{1}{n}$ for all $j$
and note $\widetilde{\lambda_{m} }(D_{n})=0.$
%Since $\widetilde{\lambda_{m} }$ is invariant under the shift map $\sigma $,
%we have that for any positive integer $p$,
%$\widetilde{\lambda_{m} }(\sigma ^{-p}(D_{n}))=0.$
%Thus
%$\widetilde{\lambda_{m} }(\cup _{n=1}^{\infty }\cup _{p=1}^{\infty }\sigma
%^{-p}(D_{n}))=0.$
Since $\{ \overline{a}\mid \liminf a_{k}>0\}
= \{ \overline{a}\mid \inf a_{k}>0\}
=\cup _{n=1}^{\infty }D_{n}$, we obtain that
$\widetilde{\lambda_{m} }(\{ \overline{a}\mid \liminf a_{k}>0\})=0.$
\end{proof}

\begin{example}\label{hdim=0} Using $m=2, a=1/4 \leq 1/(m+1)$, and $a_k=a^{k}$ for $k=1, 2, \dots$,  the Cantor-like set $I$ given in~\eqref{Idef} satisfies the following:

a) $\dim_H I=0$,

b) Cap $I>0$,

c) $I$ is HNUP.

\end{example}

\begin{proof}[Proof of (a) in Example~\ref{hdim=0}]
Set $A_0=1$ and note that each of the $2^k$ basic intervals in $I_k$ has length $A_k=a_1 \dots a_k=a^{1}a^{2}a^{3} \dots a^{k}=a^{\sum_{j=1}^k j}=a^{k(k+1)/2}=a^{(k^2+k)/2}$.

By Proposition~4.1 of~\cite{Falc},
$$\dim_H(I)\leq \liminf_{k \to \infty}
\frac{\log2^{k}}{-\log A_k}=\liminf_{k \to \infty}
\frac{\log2^{k}}{-\log a^{(k^2+k)/2}}=\liminf_{k \to \infty}
\frac{2k\log2}{-(k^2+k)\log a}=0.$$
\end{proof}

\begin{proof}[Proof of (b) in Example~\ref{hdim=0}]
Let $E_k$ be the set of $2^{k+1}$ endpoints of the $2^{k}$  basic intervals of $I_{k}$.  For disctinct points $z,z' \in I$, let $\mu(z,z')=\max\{k:z \textrm{ and } z' \textrm{ lie in the same basic interval of } I_{k}\}.$  Thus for $z,z' \in I$, we see that
$\mu(z,z')=0$ implies that $e_1 \le |z-z'| \le A_0$, and, in general, since the gaps $e_k$ are decreasing,
%$m(z,z')=1$ implies that $e_2 \le |z-z'| \le A_1$,
%
%$m(z,z')=2$ implies that $e_3 \le |z-z'| \le A_2$,
%
%$m(z,z')=3$ implies that $e_4 \le |z-z'| \le A_3$,
$\mu(z,z')=\ell$ implies that $e_{\ell+1} \le |z-z'| \le A_\ell$.

Given $z \in E_k$ and $\ell \in \{0, \dots, k\}$ there are exactly $2^{k-\ell}$ points  $z' \in E_k$ with $\mu(z,z')=\ell$ and so for fixed $z \in E_k$ we have $$\prod_{\overset{z' \in E_k}{z' \neq z}} |z-z'| =\prod_{\ell=0}^k \prod_{\overset{z' \in E_k}{\mu(z,z')=\ell}} |z-z'| \ge \prod_{\ell=0}^k (e_{\ell+1})^{2^{k-\ell}}.$$

Thus $$P_{2^{k+1}}^2\ge \prod_{z \in E_k}\prod_{\overset{z' \in E_k}{z' \neq z}} |z-z'| \ge \prod_{z \in E_k} \prod_{\ell=0}^k (e_{\ell+1})^{2^{k-\ell}}=\left(\prod_{\ell=0}^k (e_{\ell+1})^{2^{k-\ell}}\right)^{2^{k+1}},$$
and so
\begin{equation}\label{logP}
\log P_{2^{k+1}}^2 \ge 2^{k+1}\sum_{\ell=0}^k 2^{k-\ell} \log(e_{\ell+1}).
\end{equation}

We show Cap $I>0$ by
using~\eqref{gap} to assert $e_{k+1} \geq B A_k$ and then computing
$$\log(\textrm{Cap }I)=\lim_{k \to \infty} \log D_{2^{k+1}} =\lim_{k \to \infty} \frac{1}{2^{k+1}(2^{k+1}-1)} \log P_{2^{k+1}}^2
=\lim_{k \to \infty}  2^{-2k-2} \log P_{2^{k+1}}^2 $$
$$\ge \sum_{\ell=0}^\infty 2^{-\ell-1} \log(e_{\ell+1})
\ge \sum_{\ell=0}^\infty 2^{-\ell-1} \log (B A_\ell)
%=\sum_{l=0}^\infty 2^{-l-1} ( \log B + \log A_l ) %
=\sum_{\ell=0}^\infty 2^{-\ell-1} \left( \log B + \frac{(\ell^2+\ell)\log a}{2} \right)>-\infty.$$
%
%
%By~\eqref{logP}, which holds here as well, and~\eqref{gap}
%$$\log(\textrm{Cap }I) =\lim_{k \to \infty} \log D_{2^k}=\lim_{k \to \infty} \frac{1}{2^k(2^k-1)} \log P_{2^k}^2 =\lim_{k \to \infty}  2^{-2k} \log P_{2^k}^2 \ge 2\sum_{l=0}^\infty 2^{-l} \log(e_{l+1})$$
%
%$$ \ge 2\sum_{l=0}^\infty 2^{-l} \log(A_l/3) \ge 2\sum_{l=0}^\infty 2^{-l} [\log A_l - \log 3]\ge \sum_{l=0}^\infty 2^{-l} [-(l^2+l)\log 4- \log 3]>-\infty.$$
%Hence Cap $I>0$.
\end{proof}

\begin{remark}
In Example~\ref{hdim=0} one can instead apply Frostman's Theorem to show Cap $I>0$ and thus forgo the given computation.
In this case, we can take $h(t)=\exp(-c \sqrt{-\log t})$ for small enough $t>0$ and a suitable constant $c>0.$  We omit the details. %Perhaps, $c=\log m/\sqrt{-(\log a)/2}$ (or $\pm \epsilon$ at most).
\end{remark}

\begin{proof}[Proof of (c) in Example~\ref{hdim=0}]
This follows from Theorem~\ref{hnup}(1) since $a_k = a^k \to 0$.
\end{proof}

%\begin{question}\label{conj1}
% Must a compact set $E\subset \C$ such that $m_2(E)>0$ necessarily contain a uniformly perfect subset?  Thus if $E$ is HNUP, then must $m_2(E)=0$?
%\end{question}

%\begin{remark}
%The Lebesgue Density Theorem implies that $\frac{m_2(B(a,r) \cap E)}{\pi r^2}\to 1$ as $r \to 0^+$ for a.a. $a \in E$. Hence if $m_2(E)>0$ it seems that for a choice of $a$ and $r$ the set $B(a,r) \cap E$ must be so ``thick" as to contain a uniformly perfect subset.  However, carefully extracting such a subset has eluded the authors.
%Further, Example~\ref{m1>0} demonstrates that the 1-dimensional analog of Open Question~\ref{conj1} has a negative answer, i.e., there exists a HNUP set $X \subset \R$ with $H^1(X)>0$.
%\end{remark}

We now present the proof of Theorem~\ref{dim=2} after a few preliminaries.

For each $m=2, 3, \dots$ let $W_m$ be the set $I$ constructed in Example~\ref{hdim} using $a=1/(m+1)$ and set $G_m = W_m \times W_m$.  Note that by the Product Formulas 7.2 and 7.3 of~\cite{Falc}, we have $2 \log{m}/\log(m+1) = \dim_H W_m+\dim_H W_m \leq \dim_H G_m \leq \dim_H W_m+\overline{\dim}_B W_m \leq \log{m}/\log(m+1) +1 <2$ where $\overline{\dim}_B$ denotes the upper box counting dimension.

%It is not clear whether $E \times F$ must necessarily be HNUP whenever both $E$ and $F$ are HNUP subsets of $\R$, but the following special case does hold.

\begin{proposition}\label{Cross}
Let $m \in \{2, 3, \dots\}$ and $\bar{a}=(a_1, a_2, \dots) \in (0, \frac{1}{m+1}]^\mathbb{N}$, with $\liminf a_k = 0$ (so that $I_{\bar{a}}$ given as in~\eqref{Idef} is HNUP by Theorem~\ref{hnup}).   Then $E= I_{\bar{a}} \times I_{\bar{a}}$ is HNUP.
\end{proposition}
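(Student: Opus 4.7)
The plan is to adapt the one-dimensional argument from the proof of Theorem~\ref{hnup}(1) to the product set. For each point $(x,y) \in E$ and each level $k$, I will identify the unique basic square $S_k = J_x^{(k)} \times J_y^{(k)}$ of $I_k \times I_k$ containing $(x,y)$, where $J_x^{(k)}$ and $J_y^{(k)}$ are the basic intervals of $I_k$ containing $x$ and $y$ respectively; this is a closed square of side $A_k$ with some center $c_k$.

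The core construction is to take the true annulus $Ann(c_k; r_k, R_k)$ with inner radius $r_k = \sqrt{2}\,A_k/2$ (the circumradius of $S_k$) and outer radius $R_k = A_k/2 + e_k$. A direct geometric check will give $E \cap Ann(c_k; r_k, R_k) = \emptyset$: every point of $E \cap S_k$ has distance at most $r_k$ from $c_k$, whereas every point of $E \setminus S_k$ lies in some basic square of $I_k \times I_k$ other than $S_k$, and hence differs from $(x,y)$ in at least one coordinate by at least $e_k$ (since the gaps of $I_k$ are decreasing, the minimum gap between distinct basic intervals of $I_k$ is exactly $e_k$), which forces its Euclidean distance from $c_k$ to be at least $A_k/2 + e_k = R_k$.

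Using the identity $e_k/A_k = (1-ma_k)/((m-1)a_k)$ derived in the proof of Theorem~\ref{hnup}(1) and passing to a subsequence $k_n$ along which $a_{k_n} \to 0$ (available since $\liminf a_k = 0$), one gets $R_{k_n}/r_{k_n} = (1 + 2 e_{k_n}/A_{k_n})/\sqrt{2} \to +\infty$. In particular $r_{k_n} < R_{k_n}$ for all sufficiently large $n$, so these are nondegenerate true annuli of modulus tending to $+\infty$ that separate $E$ near $(x,y)$ (in the sense that the bounded component contains $(x,y)$ and the annulus avoids $E$).

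Finally, to deduce HNUP I will argue by contradiction. Suppose $F \subseteq E$ is uniformly perfect, and fix two distinct points $(x,y), (x',y') \in F$. Since $A_k \to 0$ and $e_k \searrow 0$, both $r_{k_n}$ and $R_{k_n}$ tend to $0$ and $c_{k_n} \to (x,y)$; for large $n$ the point $(x',y')$ lies in the unbounded component $\{|z - c_{k_n}| \geq R_{k_n}\}$ of $\C \setminus Ann(c_{k_n}; r_{k_n}, R_{k_n})$, while $(x,y) \in S_{k_n}$ lies in the bounded component. Thus the annuli separate $F$ with moduli tending to $+\infty$, contradicting uniform perfectness of $F$. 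The only subtlety I anticipate is the extra factor $\sqrt{2}$ coming from the diagonal of the square (as opposed to the length of an interval), but because $e_k/A_k$ blows up along the chosen subsequence, this factor is absorbed automatically.
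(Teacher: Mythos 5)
Your proof is correct and follows essentially the same route as the paper's: the paper also centers a true annulus at the center of the basic square $J_x\times J_y$ with inner radius $\sqrt{2}A_k/2$ and outer radius $A_k/2+e_k$ (obtained by inscribing it in the square-shaped annulus around $J_x\times J_y$), and concludes via $R_{k_n}/r_{k_n}\to\infty$ along the subsequence with $a_{k_n}\to 0$. Your direct verification that the annulus misses $E$ and your explicit two-point argument for why no subset can be uniformly perfect are just slightly more detailed versions of the same steps.
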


\begin{proof}
Select a subsequence $a_{k_n}$ such that $a_{k_n} \to 0$.  Consider a point $(x,y) \in E$.  Now consider a basic interval $J_x$ of $I_k$ (in the construction of $I_{\bar{a}}$) containing $x$ and a basic interval $J_y$ of $I_k$ containing $y$.  As before we let $x_{J_x}$ denotes the midpoint of $J_x$, $x_{J_y}$ denotes the midpoint of $J_y$, $r_k=A_k/2$, and $R_k=r_k +e_k$.  Hence we see that $S=([x_{J_x}-R_k, x_{J_x}+R_k] \times [y_{J_y}-R_k, y_{J_y}+R_k]) \setminus (J_x \times J_y)$ is a (square shaped) annulus that separates $E$.  The picture quickly shows that $S$ contains the true annulus $C=Ann((x_{J_x},y_{J_y});\sqrt{2}r_k,R_k)$ which also separates $E$.

Since as shown in the proof of Theorem~\ref{hnup}(1), $R_{k_n}/r_{k_n} \to +\infty$ (with $R_{k_n} \to 0$), we see that for each point $(x,y) \in E$ there exists an annulus $C$ of arbitrarily large modulus with arbitrarily small outer radius which separates $E$ and has $(x,y)$ in the bounded component of the complement of $C$. Hence, $(x,y)$ can never be a point in a uniformly perfect subset of $E$ and thus no subset of $E$ can be uniformly perfect.
\end{proof}

\begin{proof}[Proof of Theorem~\ref{dim=2}]
Choose $0<\rho_n<1$ such that $\rho_n$ strictly decreases to 0 as $n \to \infty$.
%and $\rho_n/\rho_{n+1}$ increases to infinity (e.g., let $\rho_n = 1/2^{n^2}$).
Consider the annuli $B_n = Ann(0;\rho_{n+1},\rho_n)$.  For each $m = 2, 3, \dots$ let $G_m'$ be a translated and scaled down copy of $G_m$ such that $G_m' \subset B_{2m}$.  Note that each $G_m'$ is HNUP because each $G_m$ is HNUP and uniformly perfect sets remain uniformly perfect after translation and scaling.

Call $E= \{0\} \cup \bigcup_{m=2}^\infty G_m'$ noting that it is compact and also HNUP since the annuli $B_{2m+1}$ separate the HNUP sets $G_m'$ from each other.  Since $\dim_H G_m'=\dim_H G_m \geq 2 \log{m}/\log(m+1)$, we must have $\dim_H E = \sup_m \dim_H G_m' = \sup_m \dim_H G_m =2$.

Note that $m_2(E)=0$ since for all $m$ we have $m_2(G_m')=0$ because $\dim_H G_m'<2$.

Note that, due to the construction of the $G_m$, each point in $E \setminus \{0\}$ satisfies the the additional property of the theorem, and by choosing $\rho_n$ so that $\rho_n/\rho_{n+1} \to \infty$ as $n \to \infty$ we can ensure that that $0$ does too.
\end{proof}

\begin{proof}[Proof of Theorem~\ref{dim=1}]
Again, choose $0<\rho_n<1$ such that $\rho_n$ strictly decreases to 0 as $n \to \infty$.
Consider the annuli $B_n = Ann(0;\rho_{n+1},\rho_n)$.  For each $m = 2, 3, \dots$ let $W_m'$ be a translated and scaled down copy of $W_m$ such that $W_m' \subset B_{2m} \cap [0, 1]$.  Note that each $W_m'$ is HNUP because each $W_m$ is HNUP.

Call $W= \{0\} \cup \bigcup_{m=2}^\infty W_m'$ noting that it is compact and also HNUP.  Since $\dim_H W_m'=\dim_H W_m =  \log{m}/\log(m+1)$, we must have $\dim_H W = \sup_m \dim_H W_m' = \sup_m \dim_H W_m =1$.

Note that $H^1(W)=0$ since for all $m$ we have $H^1(W_m')=0$ because $\dim_H W_m'<1$.

In the same manner as in the proof of Theorem~\ref{dim=2}, we can also ensure that $W$ satisfies the the additional property of the theorem.
\end{proof}

\begin{remark}
  Note that $W \times W$ where $W$ is as constructed in the proof of Theorem~\ref{dim=1} is a candidate for satisfying the statement of Theorem~\ref{dim=2}, but it is not clear if it is HNUP.  The proof of Proposition~\ref{Cross} worked because of the high degree of uniformity in the sets $W_m$, that is, given $(x, y) \in W_m \times W_m$, for each basic set $J_x$ containing $x$ there was a basic set $J_y$ containing $y$ of the (exact) same size and both $J_x$ and $J_y$ had the exact same gap $e_n$ around them.  We do not have such a uniform structure in $W \times W$.  It would be interesting, however, to be able to settle the following related question.
\end{remark}

\begin{question}\label{ExE'}  Must $E \times E'$ be a HNUP subset of $\C$ when each $E$ and $E'$ are HNUP subsets of $\R$?
\end{question}

%
%Lastly, we note that an affirmative answer to Open Question~\ref{ExE'} would allow us to answer (negatively) Open Question~\ref{conj1}.  In particular, letting $X$ be as in Example~\ref{m1>0}, the set $X \times X$ would be HNUP with $m_2(X \times X)>0$.  However, at this point both Open Questions~\ref{conj1} and~\ref{ExE'} remains unsettled.

\vskip.1in

In addition to our thanks already expressed regarding Examples~\ref{m1>0} and~\ref{m2>0}, we thank here both the first and second referees for helpful comments that improved the presentation of this paper.

\vskip.1in

Acknowledgement.  This work was partially supported by a grant from the Simons Foundation (\#318239 to Rich Stankewitz). The research of the third author was partially supported by
JSPS KAKENHI 24540211, 15K04899.

\bibliographystyle{plain}

\bibliography{kyoto}

\end{document}